\newtheorem{thm}{Theorem}[section]
\newtheorem{lem}[thm]{Lemma}
\newtheorem{lemma}[thm]{Lemma}
\newtheorem{proposition}[thm]{Proposition}
\newtheorem{prop}[thm]{Proposition}
\theoremstyle{definition}
\theoremstyle{remark}
\newtheorem{remark}[thm]{Remark}
\newcommand{\ds}{\displaystyle}
\title{Polynomial Identities on Eigenforms}
\author {Joseph Richey}
\address{Department of Mathematics\\ 
530 Church Street\\
University of Michigan\\
Ann Arbor, MI 48109}
\email[] {josephlr@umich.edu}
\author {Noah Shutty}
\address{Department of Mathematics\\ 
530 Church Street\\
University of Michigan\\
Ann Arbor, MI 48109}
\email[] {noajshu@umich.edu}
\urladdr[]{\url{umich.edu/~noajshu}}
\newcommand{\N}{\mathbb N}
\newcommand{\C}{\mathbb C}
\newcommand{\Z}{\mathbb{Z}}
\DeclareMathOperator{\SL}{SL}
\newcommand{\M}{{\text{M}}}
\begin{document}
\date{\today}
\subjclass[2010] {Primary 11F11}
\keywords{Polynomial relations, eigenform, Eisenstein series, Hecke relations}

\begin{abstract}
In this paper, we fix a polynomial with complex coefficients and determine the eigenforms for $\SL_2\left(\Z\right)$ which can be expressed as the fixed polynomial evaluated at other eigenforms. In particular, we show that when one excludes trivial cases, only finitely many such identities hold for a fixed polynomial.
\end{abstract}

\maketitle

\section{Introduction}\label{section:intro}
Identities between Hecke eigenforms often give rise to surprising relationships between arithmetic functions. A well-known example involves the sum of divisor functions $\sigma_3(n)$ and $\sigma_7(n)$:

\begin{equation}\label{classical_identity}
\sigma_7(n)=\sigma_3(n)+120\sum_{j=1}^{n-1}{\sigma_3(j)\sigma_3(n-j)}.
\end{equation}

This identity is easily derived from the fact that $E_4E_4=E_8$, where $E_4$ (respectively $E_8$) is the weight 4 (respectively 8) Eisenstein series for $\SL_2(\Z)$. Any product relation among eigenforms gives rise to a similar identity. Duke \cite{Duke} has shown that an eigenform for $\SL_2(\Z)$ may be decomposed as a product of two others in only sixteen cases (independently observed by Ghate \cite{Ghate_initial}). Ghate \cite{Ghate_final} later considered eigenforms of higher level, and showed that there are still only finitely many such decompositions as long as the level is squarefree, and the weights of all eigenforms are at least 3. Johnson \cite{Johnson} has recently extended this result by showing that only a finite number of decompositions involving eigenforms of weight at least 2 exist for any given level and character.

Emmons and Lanphier \cite{Emmons} considered product decompositions involving any number of eigenforms for $\SL_2(\Z)$, and showed that the only relations that arise are the 16 identified by Duke and Ghate, and a few trivially implied by them. In fact, these results show that the product decompositions they describe occur only when dimension considerations require it.

In this paper we move from monomial to polynomial decompositions. Our main result is the following theorem:

\begin{thm}\label{thm:main}
For a fixed $P \in \C[x_1,x_2,\dots,x_n]$ there exist only finitely many $n+1$ tuples $(f_1,f_2,\dots,f_n,h)$ of non-zero eigenforms for $\SL_2(\Z)$ such that 
\begin{equation}\label{eq:main}
P(f_1,f_2,\dots,f_n)=h,
\end{equation}
where the weights of all the $f_i$ are less than the weight of $h$.
\end{thm}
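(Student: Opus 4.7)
The plan is to reduce the problem to a finite enumeration by first bounding the weight $K := \mathrm{wt}(h)$, after which finite-dimensionality of the ambient spaces of modular forms produces the finite list.

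Expand $P = \sum_j c_j M_j$ into its monomials $M_j = x_1^{a_1^{(j)}} \cdots x_n^{a_n^{(j)}}$, so that
\[
P(f_1, \ldots, f_n) = \sum_j c_j\, f_1^{a_1^{(j)}} \cdots f_n^{a_n^{(j)}}.
\]
Each term $M_j(f_1, \ldots, f_n)$ is a modular form of pure weight $W_j := \sum_i a_i^{(j)} k_i$, where $k_i := \mathrm{wt}(f_i)$. Grouping by weight, the identity (\ref{eq:main}) splits into a finite system: for each weight $W$ the sum of weight-$W$ monomial contributions equals $h$ when $W = K$ and vanishes otherwise. The hypothesis $k_i < K$ forces every monomial contributing to the weight-$K$ component to have total degree at least two, so this is a genuine polynomial relation among eigenforms rather than a disguised linear one.

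The core task is then to bound $K$ by a constant depending only on $P$. For this I would compare the growth rates of $m$-th Fourier coefficients on both sides, using Deligne's bound $|a_m(f)| \ll_\varepsilon m^{(k-1)/2 + \varepsilon}$ for cuspidal Hecke eigenforms $f$ of weight $k$ and the formula $a_m(E_k) = -\tfrac{2k}{B_k}\sigma_{k-1}(m)$ for Eisenstein series. On the left, each $M_j(f_1,\ldots,f_n)$ has Fourier coefficients that are multiplicative convolutions of the $a_m(f_i)$, whose growth is governed by how many of the factors are Eisenstein versus cuspidal. Stratifying over these types — in the spirit of the arguments of Duke, Ghate, and Emmons--Lanphier for purely monomial decompositions — the comparison should force $K \leq C(P)$ for a constant depending only on the fixed polynomial $P$.

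Once $K$ is bounded, every $k_i < K$ is also bounded, so the spaces $M_{k_i}$ and $M_K$ are finite-dimensional and contain only finitely many normalized Hecke eigenforms. Substituting scalar multiples $\lambda_i \tilde f_i$ of the normalized eigenforms into (\ref{eq:main}) converts the equation into a finite system of polynomial equations in the $\lambda_i$; a routine argument using linear independence of the relevant eigenforms within each weight shows that this system has only finitely many solutions, yielding finitely many tuples $(f_1, \ldots, f_n, h)$.

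I expect the principal obstacle to be the weight bound in the third paragraph. Unlike the monomial case, one cannot match the single dominant term on each side: cancellations among monomials of weight $W \neq K$ on the left must be ruled out, and several weight-$K$ monomials may combine to produce $h$. A robust argument therefore needs to isolate a contribution that cannot be cancelled — perhaps by choosing a careful ordering of monomials (by total degree, by leading Fourier coefficient, or by a Hecke eigenvalue at a single prime) — in order to extract a usable constraint relating $K$ to the $k_i$'s.
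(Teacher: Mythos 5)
Your reduction and endgame are fine: expanding $P$ into monomials, noting each monomial has pure weight so the identity splits by weight, and observing that once the weight $K$ of $h$ is bounded everything becomes a finite check --- this matches the paper's reduction of Theorem~\ref{thm:main} to Theorem~\ref{thm:equivmain}. But the entire mathematical content of the theorem is the weight bound, and your third paragraph does not prove it: ``the comparison should force $K \leq C(P)$'' is an assertion of the conclusion, and your final paragraph explicitly concedes that you do not know how to isolate an uncancellable contribution. Moreover, the strategy as stated --- comparing \emph{upper} bounds on Fourier coefficients of both sides --- cannot produce a contradiction by itself; one needs a \emph{lower} bound on some quantity attached to $h$, and for a cuspidal $h$ no individual Fourier coefficient admits a useful lower bound.

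The paper supplies exactly the missing ingredient, in two cases. When $h=E_\ell$ is Eisenstein, it compares the $q^1$-coefficients: the right side contributes $C_\ell=-2\ell/B_\ell$ and each term on the left contributes either a constant or a sum of $C_k$'s, so the identity reduces to finitely many equations of the form $\sum_i B_i C_{k_i}=s'$; a separate argument (Proposition~\ref{prop:sum=0}), exploiting that $|C_{k+2}|/|C_k|\to 0$, shows each such equation has finitely many solutions, which bounds $\ell$. When $h$ is cuspidal, the uncancellable quantity you were looking for is produced by the Hecke relation at a fixed large prime $p$: $a_h(p)^2-a_h(p^2)=p^{\ell-1}$ holds exactly, with no possibility of cancellation. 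Against this the paper proves (Propositions~\ref{prop:boundproductofcusp} and~\ref{prop:cuspidalcaseassist}) that any product of at least two eigenforms of total weight $\ell$ satisfies $|a_{PF}(n)|\leq n^{\ell/2-1}$ once $\ell$ is large --- a full power of $n$ better than the trivial bound $2n^{\ell/2}$, the saving coming from the convolution structure via the factor $\left(\frac{a}{\ell}\right)^{a/2}\left(\frac{\ell-a}{\ell}\right)^{(\ell-a)/2}$, which is small when $\ell$ is large. Feeding this into the Hecke relation gives $p^{\ell-1}\leq Cp^{\ell-2}$ for a constant $C$ depending only on the coefficients of $P$, which fails for $p$ chosen large enough, bounding $\ell$. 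Your passing suggestion of ``a Hecke eigenvalue at a single prime'' points in the right direction, but without the quadratic Hecke relation and the one-power saving for products, the argument does not close.
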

Note that in order to discuss polynomial relationships amongst eigenforms, the addition of these eigenforms must be defined.  Thus, if we have a polynomial relation on eigenforms, each term of the polynomial will have the same weight. 

Out proof considers a particular decomposition of an arbitrary eigenform, and obtains an upper bound for the weight of that form. For the Eisenstein series, we rely on the fact that we have an explicit formula for their Fourier coefficients. The cuspidal case is more difficult. Our proof relies on several number theoretic lemmas, the Hecke relation satisfied by the Fourier coefficients of eigenforms, and bounds on the magnitude of the Fourier coefficients of cuspidal eigenforms.

As mentioned above, it was observed by Duke \cite{Duke}, Ghate \cite{Ghate_final}, and Emmons and Lanphier \cite{Emmons} that the only product decompositions of eigenforms over $\SL_2(\Z)$ are those forced by dimension considerations. Extensive computations suggest that this is true for most, if not all, polynomial decompositions as well.

\section{Notation and Conventions}\label{section:notation}

Throughout this paper, if $f$ is a modular form over $\SL_2(\Z)$, we will let $a_f(n)$ denote the $n^{th}$ Fourier coefficient of $f$. In other words, $f$ has a Fourier expansion given by: 
\begin{equation}\label{fourier-def}
f(z) = \sum_{n=0}^{\infty}a_f(n)q^n,
\end{equation}
where $q=e^{2\pi i z}$.   

All Eisenstein series will be normalized so as to have constant coefficient equal to one. Thus, if $E_k$ is the unique Eisenstein series of weight $k$ (where $k$ is an even integer with $k\geq 4$), then
\begin{equation}\label{eq:Eisenstein-def}
E_{k}(z)=1+C_{k}\sum_{n=1}^{\infty}\sigma_{k-1}(n)\, q^{n},
\end{equation}
where $C_k=\frac{(2\pi i)^{k}}{(k-1)!\zeta(k)}=\frac{-2k}{B_{k}}$
(here $B_{k}$ is the $k^{th}$ Bernoulli number).

In this paper, we will prove the following theorem which implies Theorem~\ref{thm:main}.

\begin{thm}\label{thm:equivmain}
Fix a positive integer $r$ and for each $1 \leq i \leq r$ fix $A_{i}\in\mathbb{C}$
and $n_{i},m_{i}\in\mathbb{N}$ such that $n_i+m_i\geq 2$. There are only finitely many eigenforms h such that
\[
\sum_{i=1}^{r}A_{i}P_{i}F_{i}=h,
\]

where $P_{i}$ is the product of $n_{i}$ Eisenstein series and $F_{i}$ is the product of $m_{i}$ cuspidal eigenforms.

\end{thm}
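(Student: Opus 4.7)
My plan is to bound the weight $k$ of $h$ by a constant depending only on the fixed data $r,(A_i),(n_i),(m_i)$. Once $k$ is bounded, finiteness follows, since there are only finitely many normalized eigenforms of weight $\leq k$ (one Eisenstein series per even weight, and a finite-dimensional space of cusp forms) and only finitely many ways to factor each $P_iF_i$ into prescribed types of eigenforms with total weight $k$.

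First I would compare constant Fourier coefficients. Since $a_{P_iF_i}(0) = 1$ when $m_i = 0$ and $a_{P_iF_i}(0) = 0$ otherwise, we get $\sum_{i:\,m_i=0} A_i = 1$ when $h$ is Eisenstein and $\sum_{i:\,m_i=0} A_i = 0$ when $h$ is cuspidal, splitting the problem into two cases. The main device is then asymptotic analysis of $a_h(p)$ at a prime $p$. For each index $i$, expanding the product gives
\[
a_{P_iF_i}(p) = \sum_{a_1 + \cdots + a_{s_i} = p} \prod_{j=1}^{s_i} a_{g_{i,j}}(a_j), \qquad s_i = n_i + m_i \geq 2,
\]
where the $g_{i,j}$ are the factors. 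For a pure Eisenstein term ($m_i = 0$) with Eisenstein weights $j_{i,1},\dots,j_{i,n_i}$ summing to $k$, substituting $a_{E_j}(n) = C_j \sigma_{j-1}(n)$ and approximating the multi-convolution by an integral yields
\[
a_{P_i}(p) \;\sim\; C_{j_{i,1}}\cdots C_{j_{i,n_i}} \,\frac{(j_{i,1}-1)!\cdots (j_{i,n_i}-1)!}{(k-1)!}\, p^{\,k-1}
\]
as $p \to \infty$. For terms containing cuspidal factors, Deligne's bound $|a_f(p)| \leq 2 p^{(k_f-1)/2}$ yields strictly smaller contributions in $p$.

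In the Eisenstein case ($h = E_k$), matching the leading $p^{k-1}$ coefficient on the right against $a_{E_k}(p) = C_k(1 + p^{k-1})$, together with the next-order terms in $p$, gives a system of identities among the $C_j$'s, equivalently among Bernoulli numbers. The explicit arithmetic of these values (and the fact that $|C_k| \sim (2\pi)^k/(k-1)!$ decays while each finite term on the right has a fixed combinatorial shape) forces $k$ into a bounded range. In the cuspidal case, $|a_h(p)| \leq 2 p^{(k-1)/2}$ forces the full $p^{k-1}$ contribution on the right to cancel exactly, producing a nontrivial linear identity among the Eisenstein prefactors indexed by partitions of $k$ into even parts $\geq 4$.

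The main obstacle is this cancellation in the cuspidal case: showing that it cannot hold for arbitrarily large $k$. To force the issue I would generate further independent constraints by also matching $a_h(p^\ell)$ for $\ell = 2, 3,\dots$, using the Hecke recursion $a_h(p^{\ell+1}) = a_h(p)\,a_h(p^\ell) - p^{k-1} a_h(p^{\ell-1})$ on the left and the full convolution expansion of $a_{P_iF_i}(p^\ell)$ on the right. Combined with arithmetic lemmas on the growth of Bernoulli numbers, divisor sums $\sigma_{k-1}$, and multi-variable Beta values, together with repeated use of Deligne's bound for the cuspidal factors, the resulting overdetermined system becomes inconsistent for $k$ sufficiently large, producing the required bound.
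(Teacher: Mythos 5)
Your skeleton matches the paper's: bound the weight of $h$, split according to whether $h$ is Eisenstein or cuspidal, and exploit Fourier coefficients at primes together with Deligne's bound and the Hecke relation. But in both branches the step that actually produces a bound on the weight is asserted rather than proved, and in each case that step is the entire difficulty. In the Eisenstein case you reduce to ``a system of identities among the $C_j$'s'' and claim the explicit arithmetic ``forces $k$ into a bounded range.'' That claim is precisely the content of the paper's Proposition~\ref{prop:sum=0}, which requires a genuine double induction using the decay of $|C_k|$ and the ratios $R(k)=|C_{k+2}|/|C_k|$; nothing in your sketch supplies it. (The paper also reaches its linear relation among the $C_k$ far more cheaply, by comparing the exact $q^1$ coefficients rather than $p\to\infty$ asymptotics; and your asymptotic constant is in fact wrong --- the convolution of $\sigma_{j-1}$'s acquires extra $\zeta$ factors beyond $\prod(j_t-1)!/(k-1)!$, as one can check against $E_4^2=E_8$.)

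The cuspidal branch has the more serious gap. You argue that Deligne's bound forces the $p^{k-1}$ contribution to ``cancel exactly'' and that matching $a_h(p^\ell)$ for several $\ell$ yields an ``overdetermined system'' that is ``inconsistent for $k$ sufficiently large'' --- but no mechanism is given for why large $k$ produces an inconsistency, and the mixed terms (one or more cusp factors times a nontrivial Eisenstein product) contribute at intermediate orders $p^{\alpha}$ with $\frac{k-1}{2}<\alpha<k-1$ depending on how the weight is distributed among factors, so the cancellation structure is multi-scale and not captured by a single linear identity. There is also a quantifier-order problem: your asymptotics are taken as $p\to\infty$ with $k$ fixed, which yields constraints for each $k$ but not a bound on $k$. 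The paper inverts this: it fixes a single prime $p$ depending only on $\sum_i|A_i|$, and proves (Propositions~\ref{prop:boundproductofEisenstein}, \ref{prop:boundproductofcusp}, and \ref{prop:cuspidalcaseassist}) that for $\ell$ large each term satisfies $|a_{P_iF_i}(p)|\le p^{\ell/2-1}$ and $|a_{P_iF_i}(p^2)|\le p^{\ell-2}$ --- the key input being that Eisenstein coefficients at a \emph{fixed} index are bounded uniformly over all weights, since $|C_k|\sigma_{k-1}(n)\le (2\pi n)^k/(k-1)!\to 0$. Feeding this into $p^{\ell-1}=a_h(p)^2-a_h(p^2)$ gives $p^{\ell-1}<p^{\ell-1}$, the contradiction that bounds $\ell$. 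Without an analogue of these uniform-in-weight estimates, your proposal does not close.
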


To see why Theorem~\ref{thm:equivmain} implies Theorem~\ref{thm:main}, note that a fixed $P \in \C[x_1,x_2,\dots,x_n]$, of finite degree may only have finitely many distinct terms. For each $f_i$ in \eqref{eq:main}, choose whether it will be an Eisenstein series or a cuspidal eigenform. Clearly, there are only $2^n$ ways to preform this choice for all of the $f_i$.  Then, we can write each of the $r$ terms as $APF$, where $A \in\mathbb{C}$, $P$ is any product of $m_P$ Eisenstein series, and $F$ is any product of $m_F$ cuspidal eigenforms.  In this form, the requirement in Theorem~\ref{thm:main} that the weights of the $f_i$ be less than the weight of $h$ is equivalent to the requirement in Theorem~\ref{thm:equivmain} that $m_P+m_F \geq 2$. The equivalence follows from the finiteness of choices for the $f_i$.

To prove Theorem~\ref{thm:equivmain} it is sufficient to bound the weight of $h$, as there are only finitely many eigenforms of a given weight. 

In Section~\ref{proof:Eisenstein}, we show that if 
\begin{equation}\label{eq:Eisenstein-poly}
\sum_{i=1}^{r}A_{i}P_{i}F_{i}=E_\ell,
\end{equation}
then $\ell$ is bounded (depending only on $r,A_{i},n_{i},m_{i}$).
In Section~\ref{proof:cuspidal}, we show that if
\begin{equation}\label{eq:cuspidal-poly}
\sum_{i=1}^{r}A_{i}P_{i}F_{i}=h,
\end{equation}
where $h$ is a cuspidal eigenform, then the weight of $h$ is bounded (again depending only on $r,A_{i},n_{i},m_{i}$).
As all eigenforms are either Eisenstein series or cusp forms, the proof of Theorem~\ref{thm:equivmain} in cases \eqref{eq:Eisenstein-poly} and \eqref{eq:cuspidal-poly} suffices to prove Theorem~\ref{thm:main}. 

\section{Preliminaries}\label{section:prelim}

Here we prove some preliminary results about the properties of the Fourier coefficients of modular forms, as well as some basic number theoretic lemmas, which will be necessary in the proof of Theorem~\ref{thm:equivmain}.
\subsection{Number Theoretic Results}
\begin{lemma}\label{lem:expswitch}
If $a$ and $b$ are integers satisfying $a\geq b>1$, then the following equation holds:
\begin{equation*}
(a+1)^{a+1}(b-1)^{b-1} > a^a b^b.
\end{equation*}
\end{lemma}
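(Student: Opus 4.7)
The plan is to reduce the multiplicative inequality to a statement about a convex function by taking logarithms. The desired inequality is equivalent to
\[
(a+1)\log(a+1) - a\log a > b\log b - (b-1)\log(b-1).
\]
Introducing $g(x) = x\log x$ on $x>0$, this reads $g(a+1)-g(a) > g(b)-g(b-1)$, a comparison of forward differences of $g$ over two unit intervals $[a,a+1]$ and $[b-1,b]$.

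Next I would exploit the strict convexity of $g$: its derivative $g'(x) = 1 + \log x$ is strictly increasing, so $g$ is strictly convex on $(0,\infty)$. For any strictly convex differentiable function, secant slopes over unit-length intervals are strictly increasing in the left endpoint. Concretely, by the mean value theorem,
\[
g(a+1)-g(a) = g'(\xi_1), \qquad g(b)-g(b-1) = g'(\xi_2),
\]
with $\xi_1 \in (a, a+1)$ and $\xi_2 \in (b-1,b)$. Since $a \geq b > 1$, we have $\xi_1 > a \geq b > \xi_2$, so $g'(\xi_1) > g'(\xi_2)$ and we conclude the desired inequality.

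Equivalently, one could argue by monotonicity of the auxiliary sequence $h(n) = (n+1)^{n+1}/n^n$: its logarithmic derivative with respect to the continuous variable $n$ is $\log(1+1/n) > 0$, so $h$ is strictly increasing on $n \geq 1$, and the claim follows from $h(a) > h(b-1)$, which holds because $a \geq b$ gives $a > b-1$.

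I don't expect any real obstacle; the only care needed is that $b>1$ guarantees $b-1 \geq 1$ so that $\log(b-1)$ is meaningful and the interval $[b-1,b]$ lies in the domain of convexity, and that $a \geq b$ (not merely $a \geq b-1$) is what makes the inequality strict rather than an equality in the degenerate case.
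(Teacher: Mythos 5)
Your proof is correct and is essentially the same argument as the paper's: both come down to a short calculus computation resting on the strict monotonicity of $\log$ (equivalently, the strict convexity of $x\log x$). The paper packages it slightly differently, interpolating between the two sides with the single function $f(x)=(a+x)^{a+x}(b-x)^{b-x}$ and showing $f'(x)=f(x)\left(\log(a+x)-\log(b-x)\right)>0$ on $(0,1]$, whereas you compare the forward differences $g(a+1)-g(a)$ and $g(b)-g(b-1)$ of $g(x)=x\log x$ via the mean value theorem; the two are interchangeable and equally rigorous.
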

\begin{proof}
First, fix any integers $a$ and $b$ such that $a \geq b>1$. Now consider the function
\[
f(x) := (a+x)^{a+x}(b-x)^{b-x}.
\]
Note that this function is differentiable on $[0,1]$  with derivative
\[
f^\prime(x) = f(x)\left( \log(a+x) - \log(b-x) \right)
\]
and that $f^\prime(x)$ is positive on $(0,1]$, which implies $f(x)$ is increasing on $(0,1]$. Thus,
\[
a^a b^b = f(0) < f(1) = (a+1)^{a+1}(b-1)^{b-1}.
\]
\end{proof}

The definition of $C_k$ in equation~\eqref{eq:Eisenstein-def} and the trivial bounds $\ds{1<\zeta(k)\leq\frac{k}{k-1}}$, which hold whenever $k>1$, imply the folowing.
\begin{lemma}\label{lem:CkRatio}
Let $C_k$ be defined as in equation~\eqref{eq:Eisenstein-def}. For even integers $k\geq 4$, 
\[
\lim_{k\rightarrow\infty}\frac{\left|C_{k+2}\right|}{\left|C_{k}\right|}=0.
\]
\end{lemma}
%\begin{proof} 
%Note that
%\[
%\frac{\left|C_{k+2}\right|}{\left|C_{k}\right|}=\frac{4\pi^{2}\zeta(k)}{k(k+1)\zeta(k+2)},
%\]
%so
%\begin{equation*}
%\frac{4\pi^{2}}{k(k+2)} < \frac{\left|C_{k+2}\right|}{\left|C_{k}\right|} < \frac{4\pi^{2}}{(k-1)(k+1)},
%\end{equation*}
%because $\ds{1<\zeta(k)\leq\frac{k}{k-1}}$ for all $k>1$. This implies that $\lim_{k\rightarrow\infty}\frac{\left|C_{k+2}\right|}{\left|C_{k}\right|}=0$. 
%\end{proof}
%Using this fact, an easy induction argument shows the following:
%\begin{lemma}\label{lem:C_k growth}
%For any even integers $\ell > k \geq 4$,
%\[
%\left|C_{\ell}\right|\leq R(k)^{\frac{\ell-k}{2}}\left|C_{k}\right|.
%\] 
%\end{lemma}
%It is useful to note that because $\lim_{k\rightarrow\infty}R(k)=0$, there exists $k^\prime$ such that for all $k > k^\prime $, $R(k)<1$. This, by the definition of $R(k)$, implies that $\left|C_k\right|$ is strictly decreasing for all $k>k^\prime$. Also, because $R(k)$ approaches 0 as $k$ gets large, there exists a constant $M\geq 4$ such that $\left|R(M)\right|<\frac{1}{4}$. Thus, for all $\ell>M$,
%\[
%\left|C_{\ell}\right|\leq \left(\frac{1}{2}\right)^{\ell-M}\left|C_{M}\right|,
%\]
%which implies $\lim_{k\rightarrow\infty}\left|C_{k}\right|=0$.

%\newpage
\begin{prop}\label{prop:sum=constant}
Let $\left\{ D_{i}\right\} _{i\in\mathbb{N}}$ be a sequence of non-zero
complex numbers such that
\[
\lim_{i\rightarrow\infty}\frac{\left|D_{i+1}\right|}{\left|D_{i}\right|}=0.
\]
Then, for fixed $A_{1},A_{2},\cdots A_{m}\in\mathbb{C}\setminus\left\{ 0\right\} ,$
there are only finitely many tuples $\left(k_{1},k_{2},\cdots,k_{m}\right)$
of integers with $1\leq k_{1}<k_{2}<\cdots<k_{m}$ such that 
\[
A_{1}D_{k_{1}}+A_{2}D_{k_{2}}+\cdots+A_{m}D_{k_{m}}=s
\]
for any fixed $s\in\mathbb{C}.$
\end{prop}
\begin{proof}
Observe that the sequence $\left|D_{i}\right|$ is eventually strictly decreasing and $\lim_{i\rightarrow\infty}\left|D_{i}\right|=0$. We prove the theorem by induction on $m.$ If $m=1,$ we note that as the sequence $\left|D_{i}\right|$ is eventually strictly decreasing, the equation $D_{j}=c$ can only be satisfied by finitely many $j$ for a fixed constant $c\in\mathbb{C}.$ Thus, there are only finitely many $k_{1}$ such that $A_{1}D_{k_{1}}=s.$ The proposition for $m=1$ follows.

If $m>1,$ we will first show that, for fixed $A_{1},A_{2},\cdots A_{m}$
and $s,$ $k_{1}$ is bounded.

Suppose first that $s=0$. Let $A:=\min\left(1,\frac{\left|A_{1}\right|}{\sum_{j=2}^{m}\left|A_{j}\right|}\right).$ As $\lim_{i\rightarrow\infty}\frac{\left|D_{i+1}\right|}{\left|D_{i}\right|}=0,$
the exists some $M$ such that for all $i>M$ we have $\frac{\left|D_{i+1}\right|}{\left|D_{i}\right|}<A.$
As $A\leq1,$ this implies that for all $j>i>M$ we have $\frac{\left|D_{j}\right|}{\left|D_{i}\right|}<A.$ 
Applying the triangle inequality to $A_{1}D_{k_{1}}+A_{2}D_{k_{2}}+\cdots+A_{m}D_{k_{m}}=0$ and simplifying gives,
\[
1\leq\left|\frac{A_{2}}{A_{1}}\right|\left|\frac{D_{k_{2}}}{D_{k_{1}}}\right|+\left|\frac{A_{3}}{A_{1}}\right|\left|\frac{D_{k_{3}}}{D_{k_{1}}}\right|+\cdots+\left|\frac{A_{m}}{A_{1}}\right|\left|\frac{D_{k_{m}}}{D_{k_{1}}}\right|.
\]
Assume $k_1>M$.  As $k_{i}>k_{1}>M$ for all $2\leq i \leq m,$  we
have, by the definition of $A$,
\[
1<\left|\frac{A_{2}}{A_{1}}\right|A+\left|\frac{A_{3}}{A_{1}}\right|A+\cdots+\left|\frac{A_{m}}{A_{1}}\right|A=\left(\frac{\sum_{j=2}^{m}\left|A_{j}\right|}{\left|A_{1}\right|}\right)A\leq1.
\]
This is a contradiction, so $k_{1}$ is bounded by $M.$

Now suppose that $s\neq 0$. As $\lim_{i\rightarrow\infty}\left|D_{i}\right|=0,$ there exists
an $N$ such that for all $i>N$ we have $\left|D_{i}\right|<\frac{\left|s\right|}{\sum_{j=1}^{m}\left|A_{j}\right|}.$ 
Applying the triangle inequality to $A_{1}D_{k_{1}}+A_{2}D_{k_{2}}+\cdots+A_{m}D_{k_{m}}=s$ gives,
\[
\left|s\right|\leq\left|A_{1}\right|\left|D_{k_{1}}\right|+\left|A_{2}\right|\left|D_{k_{2}}\right|+\cdots+\left|A_{m}\right|\left|D_{k_{m}}\right|.
\]
Assume $k_1>N$. As $k_{i}\geq k_{1}>N$ for all $1\leq i\leq m,$
we have 
\[
\left|s\right|<\left(\left|A_{1}\right|+\left|A_{2}\right|+\cdots+\left|A_{m}\right|\right)\frac{\left|s\right|}{\sum_{j=1}^{m}\left|A_{j}\right|}=\left|s\right|,
\]
a contradiction. Thus, $k_{1}$ is bounded by $N.$

As $k_{1}$ is bounded, for the equation $A_{1}D_{k_{1}}+A_{2}D_{k_{2}}+\cdots+A_{m}D_{k_{m}}=s$
to be satisfied, one of the finitely many equations of the form $A_{2}D_{k_{2}}+\cdots+A_{m}D_{k_{m}}=s-A_{1}D_{c}$
(where $c$ is one of the finitely many possible values for $k_{1})$
must be satisfied. However, by the inductive hypothesis, we see that each of
these equations only admits finitely many solutions, so we have finiteness
in general. The proposition follows by induction.
\end{proof}

\begin{lem}\label{lem:divisorgrowth}
If $d(n)$ is the number of divisors of $n$, for any positive integer $n$
\[
d(n)\leq2\sqrt{n}.
\]
\end{lem}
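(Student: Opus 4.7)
The plan is to use the standard divisor-pairing argument. The key observation is that divisors of $n$ come in pairs $\{d, n/d\}$, and in any such pair at least one element is at most $\sqrt{n}$. So I will split the divisors of $n$ into those that are $\leq \sqrt{n}$ and those that are $> \sqrt{n}$, and show each class has at most $\sqrt{n}$ elements.

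More concretely, first I would note that the set of divisors $d$ of $n$ with $d \leq \sqrt{n}$ is contained in $\{1, 2, \dots, \lfloor \sqrt{n} \rfloor\}$, so it has at most $\lfloor \sqrt{n} \rfloor \leq \sqrt{n}$ elements. Next, I would observe that the map $d \mapsto n/d$ is an involution on the set of divisors of $n$, and it sends the set of divisors strictly greater than $\sqrt{n}$ injectively into the set of divisors strictly less than $\sqrt{n}$. Hence the number of divisors $> \sqrt{n}$ is also bounded above by $\lfloor \sqrt{n} \rfloor \leq \sqrt{n}$.

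Adding these two bounds yields $d(n) \leq 2\sqrt{n}$, as desired. There is no real obstacle here; the only minor point to be careful about is the handling of the case where $\sqrt{n}$ is itself an integer dividing $n$, but placing it in the first class (divisors $\leq \sqrt{n}$) and letting the second class consist of divisors $> \sqrt{n}$ avoids any double-counting.
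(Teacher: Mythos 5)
Your proof is correct and uses the same divisor-pairing argument as the paper: each divisor greater than $\sqrt{n}$ corresponds via $d \mapsto n/d$ to one at most $\sqrt{n}$, so both classes have at most $\sqrt{n}$ elements. Your write-up is slightly more careful about the boundary case $d = \sqrt{n}$, but the idea is identical.
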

\begin{proof}
First, note that $n_0$ is a divisor of $n$ if and only if $\frac{n}{n_0}$ is a divisor of $n$. Thus, $d(n)$, the number of divisors of $n$, is bounded above by twice the number of divisors less than or equal to $\sqrt{n}$.  However, the number of divisors less than or equal to $\sqrt{n}$ is bounded above by $\sqrt{n}$. The lemma follows.
\end{proof}

\subsection{Bounds on Fourier coefficients}
\begin{prop}\label{prop:boundproductofEisenstein}
Fix positive integers $n,m$ and let $P$ be any product of $m$ Eisenstein series. Then there exists a real number $B(n,m)$, depending only on $n$ and $m$, such that
\[
\left|a_P(n)\right|\leq B(n,m).
\]
\end{prop}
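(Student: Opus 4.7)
The plan is to expand $a_P(n)$ as an $m$-fold convolution and bound each contributing Fourier coefficient uniformly in the weight. Writing $P = E_{k_1}E_{k_2}\cdots E_{k_m}$, standard multiplication of Fourier series gives
\[
a_P(n) = \sum_{\substack{n_1+\cdots+n_m = n \\ n_i \geq 0}} \prod_{i=1}^{m} a_{E_{k_i}}(n_i).
\]
The number of compositions of $n$ into $m$ non-negative parts is $\binom{n+m-1}{m-1}$, which depends only on $n$ and $m$. Therefore it suffices to bound each factor $|a_{E_{k}}(n')|$ for $0 \leq n' \leq n$ by a constant that depends only on $n'$ (and not on $k$).

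For $n' = 0$ we have $a_{E_k}(0) = 1$ by the normalization in~\eqref{eq:Eisenstein-def}. For $n' \geq 1$ we have $a_{E_k}(n') = C_k\, \sigma_{k-1}(n')$. Using the trivial bound $\sigma_{k-1}(n') \leq d(n')\,(n')^{k-1}$ together with $|C_k| = (2\pi)^k/((k-1)!\,\zeta(k)) \leq (2\pi)^k/(k-1)!$, one obtains
\[
|a_{E_k}(n')| \leq \frac{d(n')\,(2\pi n')^{k}}{n'\,(k-1)!}.
\]
Since factorial growth dominates exponential growth, the right-hand side tends to $0$ as $k \to \infty$, so $B_0(n') := \sup_{k \geq 4,\, k \text{ even}} |a_{E_k}(n')|$ is a finite quantity depending only on $n'$.

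Setting
\[
B(n,m) := \binom{n+m-1}{m-1} \Big(\max_{0 \leq n' \leq n} B_0(n')\Big)^{m},
\]
the triangle inequality applied to the convolution expression for $a_P(n)$ yields $|a_P(n)| \leq B(n,m)$, as required. The only non-routine step is the uniform-in-$k$ bound for $|a_{E_k}(n')|$, but this follows immediately from the factorial denominator in $C_k$; no deeper input is needed.
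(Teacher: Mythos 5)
Your proof is correct and rests on the same key observation as the paper's: for fixed $n'$, the bound $|a_{E_k}(n')| \leq d(n')(2\pi n')^k/(n'(k-1)!)$ tends to $0$ as $k \to \infty$ because of the factorial in $C_k$, so the Eisenstein coefficients are bounded uniformly in the weight. The only difference is presentational — you unroll the product into a single $m$-fold convolution with a multinomial count of terms, whereas the paper inducts on $m$ using a two-fold convolution at each step; the content is the same.
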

\begin{proof}
We proceed by induction on $m$. If $m=1$, then $P=E_k$ for some $k\geq 4$ and 
\[
\left|a_P(n)\right|=\left|C_k\right|\sigma_{k-1}(n)=\frac{(2\pi)^k}{(k-1)!\zeta(k)}\sigma_{k-1}(n).
\]
Fix $n$. As
\[
\sigma_{k-1}(n)=\sum_{d|n}d^{k-1}\leq\sum_{d=1}^{n}d^{k-1}\leq\sum_{d=1}^{n}n^{k-1}=n^k
\]
and $\zeta(k) > 1$, we have
\[
\left|a_P(n)\right|\leq\frac{(2\pi n)^k}{(k-1)!}.
\]
However,
\[
\lim_{k\rightarrow\infty}\frac{(2\pi n)^k}{(k-1)!}=0,
\] 
so the sequence (as a function of $k$) is bounded. Therefore, there exists a real number $B$ depending only on $n$ such that $B > \frac{(2\pi n)^k}{(k-1)!}$. Letting $B=B(n,1)$ gives the proposition for $m=1$.

Now we assume the proposition is true for all $m\leq r$. So if $m=r+1$, then $P=E_k P^\prime$, where $P^\prime$ is a product of $r$ Eisenstein series. Now we have
\[
\left|a_P(n)\right|=\left|a_{E_k P^\prime}(n)\right|=\left|\sum_{i=0}^n a_{E_k}(i)a_{P^\prime}(n-i)\right|\leq\sum_{i=0}^n \left| a_{E_k}(i)\right| \left| a_{P^\prime}(n-i)\right|.
\]
Using the inductive hypothesis gives
\[\left|a_P(n)\right|\leq\sum_{i=0}^n B(i,1)B(n-i,r).\]
Letting $B(n,r+1)=\sum_{i=0}^n B(i,1)B(n-i,r)$ completes the proof.
\end{proof}

\begin{remark}
Deligne \cite{deligne74} has shown that if $f$ is a cuspidal eigenform of weight $\ell$, then
\[
a_f(n)\leq d(n)n^{\frac{\ell-1}{2}}
\]
for all $n\in\N$ (see also Lemma 0.0.0.3 of \cite{conrad}). Using Lemma~\ref{lem:divisorgrowth}, we see that
\begin{equation}\label{ram-pet}
\left|a_f(n)\right|\leq 2n^{\frac{\ell}{2}}
\end{equation}
for all $n\in\N$.
\end{remark}
\begin{lemma}\label{lem:absboundcusp}
Fix positive integers $n,m$ and let $F$ be any product of $m$ cuspidal eigenforms. If $\ell$ denotes the weight of F, then
\[
\left|a_F(n)\right|\leq 2^m n^{\frac{\ell}{2}+m-1}.
\]
\end{lemma}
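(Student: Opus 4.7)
The plan is to prove this by induction on $m$, with Deligne's bound (equation \eqref{ram-pet}) giving the base case, and a convolution estimate giving the inductive step.

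For the base case $m=1$, the product $F$ is a single cuspidal eigenform of weight $\ell$, so equation \eqref{ram-pet} directly gives $|a_F(n)| \leq 2 n^{\ell/2} = 2^1 n^{\ell/2 + 1 - 1}$, as required.

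For the inductive step, suppose the lemma holds for products of $m$ cuspidal eigenforms, and let $F = f \cdot F'$ where $F'$ is a product of $m$ cuspidal eigenforms of total weight $\ell'$ and $f$ is a cuspidal eigenform of weight $\ell_0$, so the total weight of $F$ is $\ell = \ell_0 + \ell'$. The Fourier coefficients of a product are given by the Cauchy convolution
\[
a_F(n) = \sum_{i=0}^{n} a_f(i)\, a_{F'}(n-i).
\]
The key observation is that both $f$ and $F'$ are cuspidal (the product of cusp forms is a cusp form), so $a_f(0) = a_{F'}(0) = 0$, and the sum effectively runs over $1 \leq i \leq n-1$. Applying the base case to $a_f(i)$ and the inductive hypothesis to $a_{F'}(n-i)$, and using the crude bounds $i^{\ell_0/2} \leq n^{\ell_0/2}$ and $(n-i)^{\ell'/2 + m - 1} \leq n^{\ell'/2 + m - 1}$ for $1 \leq i \leq n-1$, we obtain
\[
|a_F(n)| \leq \sum_{i=1}^{n-1} \bigl(2 i^{\ell_0/2}\bigr)\bigl(2^m (n-i)^{\ell'/2 + m - 1}\bigr) \leq (n-1)\, 2^{m+1} n^{\ell/2 + m - 1} \leq 2^{m+1} n^{\ell/2 + m},
\]
which is exactly the claimed bound with $m$ replaced by $m+1$, completing the induction.

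There is no real obstacle here; the argument is a routine induction. The only point that must be handled carefully is remembering that a product of cuspidal eigenforms is itself cuspidal, so that the convolution sum has vanishing boundary terms $i=0$ and $i=n$. Without this, the sum would contribute additional terms that would prevent the clean factor of $2^{m+1}$ in the inductive step.
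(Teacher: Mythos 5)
Your proof is correct and follows essentially the same route as the paper: induction on $m$ with Deligne's bound \eqref{ram-pet} as the base case, factoring off one cuspidal eigenform, bounding the convolution sum termwise by $n$ raised to the appropriate exponent, and absorbing the factor $n-1$ into one extra power of $n$. The only differences are notational (which factor of the product you peel off), so no further comment is needed.
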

\begin{proof}
We prove this lemma by induction on $m$. If $m=1$, then the lemma follows from \eqref{ram-pet}. Now assume the lemma holds for $m=r$ and fix $n$. Now let $F$ be a product of $r+1$ cuspidal eigenforms. This means $F=Gf$, where $f$ is a cuspidal eigenform and $G$ is a product of $r$ cuspidal eigenforms. We will let $\ell$ denote the weight of $F$ and let $\ell^\prime$ denote the weight of $G$. Using the inductive hypothesis and the triangle inequality, we have
\begin{align*}
\left|a_{F}(n)\right| & \leq\sum_{i=1}^{n-1}\left|a_{G}(i)\right|\left|a_{f}(n-i)\right| \leq\sum_{i=1}^{n-1}2^r i^{\frac{\ell^\prime}{2}+r-1}2\left(n-i\right)^{\frac{\ell-\ell^\prime}{2}}\\
& \leq\sum_{i=1}^{n-1}2^{r} n^{\frac{\ell^\prime}{2}+r-1}2n^{\frac{\ell-\ell^\prime}{2}} = \sum_{i=1}^{n-1}2^{r+1} n^{\frac{\ell}{2}+r-1} = (n-1)2^{r+1} n^{\frac{\ell}{2}+r-1}\\
& \leq 2^{r+1} n^{\frac{\ell}{2}+(r+1)-1}.
\end{align*}
This proves the lemma when $m=r+1$; the full lemma follows by induction.
\end{proof}
\begin{prop}\label{prop:boundproductofcusp}
Fix positive integers $n$ and $m$ with $m\geq2$ and let $F$ be any product of $m$ cuspidal eigenforms. Let $\ell$ denote the weight of $F$. Then for every $k\geq 0$ there exists a positive integer $L = L(n,m,k)$ such that if $\ell > L$, then
\[
\left|a_F(n)\right|\leq n^{\frac{\ell}{2}-k}.
\]
\end{prop}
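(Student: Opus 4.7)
The plan is to expand $a_F(n)$ as an $m$-fold convolution, apply the Deligne-type bound \eqref{ram-pet} to each factor, and then exploit the hypothesis $m\geq 2$ to bound the resulting sum by a geometric factor whose base is strictly smaller than $n$. Write $F=f_1\cdots f_m$ with each $f_j$ a cuspidal eigenform of weight $\ell_j$, so that $\sum_{j=1}^m \ell_j=\ell$. Since $a_{f_j}(0)=0$ for each $j$, the $m$-fold Cauchy product collapses to
\[
a_F(n)=\sum_{\substack{i_1+\cdots+i_m=n\\ i_j\geq 1}}\prod_{j=1}^m a_{f_j}(i_j).
\]
If $n<m$ the index set is empty and $a_F(n)=0$, so the bound is trivial; thus assume $n\geq m$.

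Applying \eqref{ram-pet} factor by factor yields
\[
|a_F(n)|\leq 2^m\sum_{\substack{i_1+\cdots+i_m=n\\ i_j\geq 1}}\prod_{j=1}^m i_j^{\ell_j/2}.
\]
The crucial observation is that because $m\geq 2$ and each $i_j\geq 1$, every index satisfies $i_j\leq n-(m-1)\leq n-1$. Hence $i_j^{\ell_j/2}\leq (n-1)^{\ell_j/2}$ for every $j$, and multiplying over $j$ gives $\prod_j i_j^{\ell_j/2}\leq (n-1)^{\ell/2}$. Since the number of compositions of $n$ into $m$ positive parts is at most $\binom{n-1}{m-1}$, this produces the master estimate
\[
|a_F(n)|\leq 2^m\binom{n-1}{m-1}(n-1)^{\ell/2}.
\]

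To finish, I would choose $L(n,m,k)$ so that $2^m\binom{n-1}{m-1}(n-1)^{\ell/2}\leq n^{\ell/2-k}$ holds for all $\ell>L(n,m,k)$. For $n=1$ the conclusion is automatic (as $a_F(1)=0$ when $m\geq 2$), and for $n\geq 2$ the inequality rearranges to $(n/(n-1))^{\ell/2}\geq 2^m\binom{n-1}{m-1}n^k$, whose left side tends to infinity with $\ell$ because $n/(n-1)>1$. Explicitly, the choice
\[
L(n,m,k)=\left\lceil\frac{2\log\bigl(2^m\binom{n-1}{m-1}n^k\bigr)}{\log\bigl(n/(n-1)\bigr)}\right\rceil
\]
depends only on $n,m,k$, as required. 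I do not anticipate any real obstacle: the only structural use of the hypothesis $m\geq 2$ is the uniform bound $i_j\leq n-1$, which replaces the crude estimate of Lemma~\ref{lem:absboundcusp} (with its $n^{\ell/2+m-1}$) by one with base $n-1$, after which the gap $((n-1)/n)^{\ell/2}$ defeats any fixed power $n^{-k}$ once $\ell$ is large enough.
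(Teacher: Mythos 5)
Your proof is correct, and it takes a genuinely different route from the paper's. The paper proceeds by induction on $m$: for the base case $m=2$ it optimizes $i^{a/2}(n-i)^{(\ell-a)/2}$ by calculus, first over $i$ and then over the weight split $a\in[12,\ell-12]$ (using that every cusp form has weight at least $12$), extracting a decay factor $(12/\ell)^{6}$; the inductive step then peels off the lowest-weight factor and requires a recursive choice of $L(n,r+1,k)$ in terms of $\max_{1\le i<n}L(i,r,0)$. You bypass all of this with the single observation that in the $m$-fold convolution every index obeys $i_j\le n-(m-1)\le n-1$ once $m\ge 2$, so $\prod_j i_j^{\ell_j/2}\le (n-1)^{\ell/2}$; the ratio $\left((n-1)/n\right)^{\ell/2}$ then decays exponentially in $\ell$, easily absorbing the constant $2^m\binom{n-1}{m-1}$ and the target factor $n^{k}$. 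Your argument is shorter, avoids induction, calculus, and the weight-$12$ lower bound, and yields exponential rather than polynomial decay in $\ell$. Two small housekeeping points: your explicit formula for $L(n,m,k)$ only makes sense for $n\ge m$ (otherwise $\binom{n-1}{m-1}=0$ and the logarithm is undefined), but you correctly dispose of $n<m$ separately since $a_F(n)=0$ there; and you should take $L$ to be the maximum of your expression and $1$ so that it is a positive integer in every case. Neither point affects correctness.
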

\begin{proof}
We proceed by induction on $m$. Let $m=2$ and fix $n$ and $k$, then $F=fg$, where $f,g$ are cuspidal eigenforms with weights $a,\ell-a$.  Using that, \eqref{ram-pet}, and the triangle inequality, we have
\begin{align*}
\left|a_{F}(n)\right| & \leq\sum_{i=1}^{n-1}\left|a_{f}(i)\right|\left|a_{g}(n-i)\right|\\
& \leq\sum_{i=1}^{n-1}2i^{\frac{a}{2}}2\left(n-i\right)^{\frac{\ell-a}{2}}\\
& \leq4(n-1)\max_{0\leq i\leq n}\left(i^{\frac{a}{2}}\left(n-i\right)^{\frac{\ell-a}{2}}\right).
\end{align*}

Using techniques of differential calculus, one can easily verify that
when $\ell$ and $a$ are held constant, $i^{\frac{a}{2}}\left(n-i\right)^{\frac{\ell-a}{2}}$
is maximized (viewed as a function of a real variable) when $i=\frac{na}{\ell}$ and
$n-i=\frac{n\left(\ell-a\right)}{\ell}$. Thus,
\begin{align*}
\left|a_{F}(n)\right| & \leq4(n-1)\left(\frac{na}{\ell}\right)^{\frac{a}{2}}\left(\frac{n\left(\ell-a\right)}{\ell}\right)^{\frac{\ell-a}{2}}\\
& =4(n-1)n^{\frac{a}{2}}n^{\frac{\ell-a}{2}}\left(\frac{a}{\ell}\right)^{\frac{a}{2}}\left(\frac{\ell-a}{\ell}\right)^{\frac{\ell-a}{2}}=4(n-1)n^{\frac{\ell}{2}}\left(\frac{a}{\ell}\right)^{\frac{a}{2}}\left(\frac{\ell-a}{\ell}\right)^{\frac{\ell-a}{2}}\\
& \leq4(n-1)n^{\frac{\ell}{2}}\max_{12\leq x\leq\ell-12}\left(\left(\frac{x}{\ell}\right)^{\frac{x}{2}}\left(\frac{\ell-x}{\ell}\right)^{\frac{\ell-x}{2}}\right),
\end{align*}
where the last inequality follows from the fact that $12\leq a$
and $12\leq\ell-a$ because all cuspidal modular forms have
weight 12 or greater. Again using the tools of differential calculus,
we see that when $\ell$ is held constant, $\left(\frac{x}{\ell}\right)^{\frac{x}{2}}\left(\frac{\ell-x}{\ell}\right)^{\frac{\ell-x}{2}}$
is maximized over $\left[12,\ell-12\right]$ when $x=12$ or $x=\ell-12$.
Thus,
\[
\left|a_{F}(n)\right|\leq4(n-1)n^{\frac{\ell}{2}}\left(\frac{12}{\ell}\right)^{6}\left(\frac{\ell-12}{\ell}\right)^{\frac{\ell-12}{2}}<4(n-1)n^{\frac{\ell}{2}}\left(\frac{12}{\ell}\right)^{6}
\]
because $\frac{\ell-12}{\ell}<1$. Now, we let $L(n,2,k)=12\sqrt[6]{4(n-1)n^k}$. Thus, if $\ell>L(n,2,k)$, then
\[
\left|a_{F}(n)\right|\leq 4(n-1)n^{\frac{\ell}{2}}\frac{12^{6}}{\ell^{6}}<4(n-1)n^{\frac{\ell}{2}}\frac{12^{6}}{12^{6}4(n-1)n^k}=n^{\frac{\ell}{2}-k}.
\]
Therefore, the proposition is true when $m=2$.

Now we will assume the proposition holds for $m=r$ and fix $n$ and $k$. Let $F=f_1f_2\dots f_{r+1}$, where all of the $f_i$ are cuspidal eigenforms of weight $\ell_i$ and 
$$\ell > \max\left(\frac{r+1}{r}\max_{1\leq i < n}\left(L(i,r,0)\right),12\sqrt[6]{2n^k(n-1)}\right).$$
As multiplying modular forms adds their weights, we have
\[\ell=\ell_1+\ell_2+\dots+\ell_{r+1}.\]
As $\ell > \frac{r+1}{r}\max_{1\leq i < n}\left(L(i,r,0)\right)$, if $\ell_s=\min_{1\leq j\leq n}\left(\ell_j\right)$, then
\[
\ell_s=\min_{1\leq j\leq r+1}\left(\ell_j\right)\leq\frac{\ell}{r+1}<\frac{\max_{1\leq i < n}\left(L(i,r,0)\right)}{r},
\]
which implies
\[
\ell_{1}+\dots+\ell_{s-1}+\ell_{s+1}+\dots+\ell_{r+1}>\max_{1\leq i < n}\left(L(i,r,0)\right).
\]
Let $F^{\prime}=f_{1}\dots f_{s-1}f_{s+1}\dots f_{r+1}$ and note that
$F=F^{\prime}f_{s}$.

Expanding the Fourier expressions of $F,F^{\prime},f_{s}$ and
multiplying gives
\[
a_{F}(n)=\sum_{i=1}^{n-1}a_{F^{\prime}}(i)a_{f_{s}}(n-i).
\]
As the weight of $F^{\prime}$ is greater than $L(i,r,0)$ for $1\leq i <n$, the inductive hypothesis applies and
\[
\left|a_{F^\prime}(i)\right|\leq n^{\frac{\ell-\ell_s}{2}}
\]
for $1\leq i <n$.
Using that, \eqref{ram-pet}, and the triangle inequality, we have
\begin{align*}
\left|a_{F}(n)\right| & \leq\sum_{i=1}^{n-1}\left|a_{F^{\prime}}(i)\right|\left|a_{f_{s}}(n-i)\right|\\
& \leq\sum_{i=1}^{n-1}i^{\frac{\ell-\ell_{s}}{2}}2\left(n-i\right)^{\frac{\ell_{s}}{2}}\\
& \leq2(n-1)n^{\frac{\ell}{2}}\left(\frac{12}{\ell}\right)^{6}.
\end{align*}
Here, the last inequality is derived using identical techniques to those above. As $\ell>12\sqrt[6]{2n^k(n-1)}$, we have
\[
\left|a_{F}(n)\right|\leq2(n-1)n^{\frac{\ell}{2}}\frac{12^{6}}{\ell^{6}}<2(n-1)n^{\frac{\ell}{2}}\frac{12^{6}}{12^{6}2n^k(n-1)}=n^{\frac{\ell}{2}-k}.
\]
Thus, the proposition is true when $m=r+1$ because we can let \[
L(n,r+1,k)=\max\left(\frac{r+1}{r}\max_{1\leq i < n}\left(L(i,r,0)\right),12\sqrt[6]{2n^k(n-1)}\right).
\] 
The full proposition follows by induction. 
\end{proof}

\section{Proof of Theorem~\ref{thm:equivmain} when $h$ is an Eisenstein series}\label{proof:Eisenstein}

Recall that we denote an Eisenstein series of weight $\ell$ by $E_\ell$, as in \eqref{eq:Eisenstein-def}. Suppose we have the decomposition

\begin{equation}
\sum_{i=1}^{r}A_{i}P_{i}F_{i}=E_\ell, 
\end{equation}
where $A_{i}\in\C$ and $n_{i},m_{i}\in\N$ are fixed and $P_{i}$ is the product of $n_i$ Eisenstein series and $F_i$ is the product of $m_i$ cuspidal eigenforms. 

Note first that each term in the sum for which $m_i > 0$ is a cusp form, and hence has constant term $0$. Since the sum is an Eisenstein series with constant term 1, the set $B=\{ 1 \leq i \leq r \mid m_i = 0  \}$ must be nonempty. Now consider the $q$ term of each product. For $j \notin B$, the coefficient is either $A_j$ or $0$, depending on whether the product contains one or multiple cusp forms. For $j \in B$, the $q$ coefficient is the sum of the coefficients $C_{k}$ for each $E_k$ in $P_j$. Since $\left| B \right| \leq r$, there are a finite number of possibilities for $B$, and a finite number of associated possible choices of $m_j$ for each $j \notin B$ Therefore, for each polynomial, equality of the $q$ coefficients on each side implies one of a finite number of relations of the following form.
\[
\sum_{j \in B}{A_j \sum{}^{'}C_{k} } = s + C_{\ell}
\]
Here the inner sum is over the $k$ for each $E_k$ present in $P_j$, and $s$ is a complex number which depends on the choice of $m_j$ and $A_j$ for $j \notin B$. Regrouping, we find that if there are $r'$ distinct $C_k$ present in the relation, we obtain for certain $B_i\in \mathbb{C} $ that
\[
\sum_{i = 1}^{r'}{B_i C_{k_i}} = s'
\]
for some tuple $\left(k_1, \dots , k_{r'}  \right)$. In fact, since $r' \leq 1 + \sum_{j \in B}{m_j} $ (all possible inputs and single output), there are finitely many such relations. From Lemma~\ref{lem:CkRatio} and Proposition~\ref{prop:sum=constant}, each such equation only has finitely many solutions, and so we obtain finiteness of all solutions. Equivalently, there must be some finite weight $\ell_0$ associated with each polynomial such that if $\ell > \ell_0$ then there are no solutions to  \eqref{eq:Eisenstein-poly}.

\section{Proof of Theorem~\ref{thm:equivmain} when $h$ is a cuspidal eigenform}\label{proof:cuspidal}

In this section we consider equations of the form:
\begin{equation*}
\sum_{i=1}^{r}A_{i}P_{i}F_{i}=h, 
\end{equation*}
where $A_{i}\in\C$ and $n_{i},m_{i}\in\N$ are fixed and $P_{i}$ is the product of $n_i$ Eisenstein series and $F_i$ is the product of $m_i$ cuspidal eigenforms.
In order to prove Theorem~\ref{thm:equivmain} we first consider a single term of the above polynomial.
\begin{proposition}\label{prop:cuspidalcaseassist}
Fix positive integers $n,m_P,m_F$ with $n\geq3$ and $m_P+m_F\geq2$. Let $P$ be a product of $m_P$ Eisenstein series, $F$ be a product of $m_F$ cuspidal eigenforms, and $\ell$ be the weight of $PF$. If $\ell > M$, then
\[
\left|a_{PF}(n)\right| \leq n^{\frac{\ell}{2}-1},
\]
where $M$ depends only on $n,m_P,m_F$.
\end{proposition}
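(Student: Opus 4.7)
The plan is to bound $a_{PF}(n)$ by expanding the Cauchy product
\[
a_{PF}(n) = \sum_{i=0}^{n} a_P(i)\, a_F(n-i),
\]
and using the coefficient estimates from Section~\ref{section:prelim}. Two observations simplify this at once: $a_P(0) = 1$, since $P$ is a product of Eisenstein series normalized to constant term $1$; and when $m_F \geq 1$, $a_F(0) = 0$, since $F$ is then cuspidal. I would split the argument on the values of $(m_P, m_F)$.

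If $m_F = 0$ then $m_P \geq 2$ and $a_{PF}(n) = a_P(n)$, which Proposition~\ref{prop:boundproductofEisenstein} bounds by $B(n, m_P)$, a constant depending only on $n$ and $m_P$. Since $n \geq 3$, the right-hand side $n^{\ell/2 - 1}$ grows without bound as $\ell \to \infty$, yielding the inequality for $\ell$ past an explicit threshold. If $m_P = 0$ then $m_F \geq 2$, and Proposition~\ref{prop:boundproductofcusp} with $k = 1$ immediately gives $|a_F(n)| \leq n^{\ell/2 - 1}$ once $\ell > L(n, m_F, 1)$.

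In the remaining mixed case $m_P, m_F \geq 1$, the $i = n$ boundary term drops and one is left with $\sum_{i=0}^{n-1} a_P(i)\, a_F(n-i)$. For $m_F \geq 2$, set $L_0 := \max_{1 \leq j \leq n} L(j, m_F, 1)$. If $\ell_F > L_0$, Proposition~\ref{prop:boundproductofcusp} gives $|a_F(n-i)| \leq n^{\ell_F/2 - 1}$ uniformly, so
\[
|a_{PF}(n)| \leq n^{\ell_F/2 - 1} \sum_{i=0}^{n-1} B(i, m_P),
\]
and the desired bound reduces to $n^{\ell_P/2}$ dominating the sum, which holds once $\ell_P$ is large. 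If $\ell_F \leq L_0$, Lemma~\ref{lem:absboundcusp} gives a constant bound on $|a_F|$, and $\ell_P = \ell - \ell_F$ still grows with $\ell$, so the inequality follows. For $m_F = 1$ the proposition is unavailable and one must use only the Deligne bound $|a_F(j)| \leq 2 j^{\ell_F/2}$. Isolating $i = 0$ (where $n - i = n$) from $i \geq 1$ (where $n - i \leq n - 1$) produces
\[
|a_{PF}(n)| \leq 2 n^{\ell_F/2} + 2(n-1)^{\ell_F/2} \sum_{i=1}^{n-1} B(i, m_P),
\]
and dividing by $n^{\ell_F/2}$ reduces the claim to
\[
2 + 2 C \bigl(1 - 1/n\bigr)^{\ell_F/2} \leq n^{\ell_P/2 - 1}.
\]
Here $m_P \geq 1$ forces $\ell_P \geq 4$, and $n \geq 3$ gives $n^{\ell_P/2 - 1} \geq n \geq 3$. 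A dichotomy finishes: either $\ell_F$ is large enough that the geometric factor $(1 - 1/n)^{\ell_F/2}$ makes the left side at most $3$, or $\ell_F$ is bounded and $\ell_P$ is forced to grow with $\ell$ so the right side dominates.

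The main obstacle is the $m_F = 1$ case: Proposition~\ref{prop:boundproductofcusp} cannot be invoked on a single cuspidal eigenform, and the Deligne bound cannot be sharpened. The argument works only because $(1 - 1/n)^{\ell_F/2}$ decays geometrically in $\ell_F$ while the leading convolution term sees the full factor $n^{\ell_F/2}$, and because the hypothesis $n \geq 3$ guarantees that $n^{\ell_P/2 - 1} > 1$ has room to absorb the constant $2$ coming from that leading term.
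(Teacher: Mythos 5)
Your case division and toolkit match the paper's proof almost exactly (the paper runs the same five cases: $m_P=0$, $m_F=0$, $m_F=1$ via the Deligne bound with the $i=0$ term isolated, and $m_F\geq 2$ split on whether the weight of $F$ exceeds a threshold). Your $m_F=0$, $m_P=0$, $m_F=1$, and ``$\ell_F$ bounded'' subcases are all sound and essentially identical to the paper's.

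There is, however, a genuine gap in the subcase $m_P\geq 1$, $m_F\geq 2$, $\ell_F > L_0$. You reduce the desired inequality to $\sum_{i=0}^{n-1}B(i,m_P)\leq n^{\ell_P/2}$ and assert this ``holds once $\ell_P$ is large.'' But nothing forces $\ell_P$ to be large in this subcase: $\ell\to\infty$ can be realized entirely through $\ell_F\to\infty$ with, say, $P=E_4$ and $\ell_P=4$ fixed. Then you would need $\sum_{i=0}^{n-1}B(i,1)\leq n^2$, which fails badly since already $B(i,1)\geq |C_4|\sigma_3(i)=240\,\sigma_3(i)$. So this branch of your dichotomy does not close. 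The fix is exactly what the paper does: instead of invoking Proposition~\ref{prop:boundproductofcusp} with $k=1$, invoke it with a larger $k$ (their $\kappa(n,m_P)$), chosen in advance as a function of $n$ and $m_P$ so that $n^{\ell_F/2-k}\cdot\sum_{i}B(i,m_P)\leq n^{\ell_F/2-1}\leq n^{\ell/2-1}$ holds with no largeness assumption on $\ell_P$ at all; the threshold $L(n,m_F,k)$ on $\ell_F$ then absorbs the cost. Alternatively, the geometric-decay trick you already use in your $m_F=1$ case (bounding $|a_F(n-i)|$ by $(n-1)^{\ell_F/2-1}$ for $i\geq 1$ and exploiting the factor $(1-1/n)^{\ell_F/2-1}$) would also repair this subcase. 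As written, though, the step is unjustified and the proof is incomplete.
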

\begin{remark}
In essence, our proof of Proposition~\ref{prop:cuspidalcaseassist} follows from Propositions~\ref{prop:boundproductofEisenstein} and \ref{prop:boundproductofcusp}.  In Proposition~\ref{prop:boundproductofEisenstein}, we proved a bound for each Fourier coefficient of a product of Eisenstein series using the explicit formula for the Fourier coefficients of a single Eisenstein series. In Proposition~\ref{prop:boundproductofcusp}, we proved a bound for each Fourier coefficient of a product of cuspidal eigenforms using Deligne's bound on the Fourier coefficients of a cuspidal eigenform.
\end{remark}
\begin{proof}
We divide this into 5 cases. We will use the definitions of $L(n,m,\kappa)$ and $B(n,m)$ from Proposition~\ref{prop:boundproductofcusp} and Proposition~\ref{prop:boundproductofEisenstein}, respectively.

\emph{Case 1:} $m_P=0$

This implies $m_F\geq2$ and $P=1$. Thus, if $\ell > L(n,m_F,1)$, then by Proposition~\ref{prop:boundproductofcusp} 
\[
\left|a_{PF}(n)\right| = \left|a_{F}(n)\right| \leq n^{\frac{\ell}{2}-1}.
\]

\emph{Case 2:} $m_F=0$

This implies $m_P\geq2$ and $F=1$. Thus, if $\ell > 2\log_n\left(B(n,m_P)\right)+2$, then by Proposition~\ref{prop:boundproductofEisenstein} 
\[
\left|a_{PF}(n)\right| = \left|a_{P}(n)\right| \leq B(n,m_P) = n^{\frac{2\log_n\left(B(n,m_P)\right)+2}{2}-1}\leq n^{\frac{\ell}{2}-1}.
\]

\emph{Case 3:} $m_F=1$

This implies $m_P\geq1$ and that $F$ is a cuspidal eigenform. Using Proposition~\ref{prop:boundproductofEisenstein}, equation \eqref{ram-pet}, and the triangle inequality we have
\[
\left|a_{PF}(n)\right| \leq \sum_{i=0}^{n-1}\left|a_{P}(i)\right|\left|a_{F}(n-i)\right| \leq \left|a_P(0)\right|2n^\frac{\ell-4}{2}+\sum_{i=1}^{n-1} B\left(i,m_P\right)2(n-i)^{\frac{\ell-4}{2}},
\]
where the second inequality follows from the fact that the weight of $F$ is at most $\ell-4$ because $P$ has weight at least 4.
Simplifying,
\begin{align*}
\left|a_{PF}(n)\right| & \leq 2n^{\frac{\ell}{2}-2} + \sum_{i=1}^{n-1} B\left(i,m_P\right)2(n-i)^{\frac{\ell-4}{2}} \\
& \leq \frac{2}{3}n^{\frac{\ell}{2}-1} + (n-1)\max_{1\leq i < n}\left(B\left(i,m_P\right)\right)2(n-1)^{\frac{\ell-4}{2}} \\
& =\left(\frac{2}{3} + 2\max_{1\leq i < n}\left(B\left(i,m_P\right)\right)\left(\frac{n-1}{n}\right)^{\frac{\ell}{2}-1}\right)n^{\frac{\ell}{2}-1},
\end{align*}
where the first inequality follows from $\left|a_P(0)\right|=1$ and the second inequality follows from our assumption that $n \geq 3$. Now, if $\ell > 2\log_{\frac{n}{n-1}}\left(6\max_{1\leq i < n}\left(B\left(i,m_P\right)\right)\right)+2$, then 
\[
\left|a_{PF}(n)\right| \leq n^{\frac{\ell}{2}-1}.
\]
For the sake of notation let $\kappa\left(n,m_P\right)=\sqrt[n]{\max_{0\leq i < n}\left(B\left(i,m_P\right)\right)}$.

\emph{Case 4:} $m_F\geq2$ and weight of $F$ is greater than $L\left(n,m_F,\kappa\left(n,m_P\right)\right)$

This implies that
\[
\left|a_{PF}(n)\right| \leq \sum_{i=0}^{n-1}\left|a_{P}(i)\right|\left|a_{F}(n-i)\right| \leq n\max_{0\leq i < n}\left(B\left(i,m_P\right)\right)n^{\frac{\ell}{2}-\kappa\left(n,m_P\right)} \leq n^{\frac{\ell}{2}-1},
\]
where the second inequality follows from Proposition~\ref{prop:boundproductofcusp} and Proposition~\ref{prop:boundproductofEisenstein}, and the third inequality follows from the definition of $\kappa\left(n,m_P\right)$.

\emph{Case 5:} $m_F\geq2$ and weight of $F$ is less than or equal to $L\left(n,m_F,\kappa\left(n,m_P\right)\right)$

Let $\eta=L\left(n,m_F,\kappa\left(n,m_P\right)\right)$. Using Lemma~\ref{lem:absboundcusp} and Proposition~\ref{prop:boundproductofEisenstein}, we have
\[
\left|a_{PF}(n)\right| \leq \sum_{i=0}^{n-1}\left|a_{P}(i)\right|\left|a_{F}(n-i)\right| \leq n\max_{0\leq i < n}\left(B\left(i,m_P\right)\right)2^{m_F}n^{\frac{\eta}{2}+m_F-1}.
\]
However, as all cuspidal eigenforms have weight of at least 12, $m_F\leq\frac{\eta}{12}$. Thus,
\[
\left|a_{PF}(n)\right| \leq \max_{0\leq i < n}\left(B\left(i,m_P\right)\right) 2^{\frac{\eta}{12}}n^{\frac{7\eta}{12}}.
\]
If $\ell > \frac{7\eta}{6} + 2\log_n\left(\max_{0\leq i < n}\left(B\left(i,m_P\right)\right)2^{\frac{\eta}{12}}\right) + 2$, then
\[
\left|a_{PF}(n)\right| \leq n^{\frac{\ell}{2}-1}.
\]
As these 5 cases are exhaustive, letting
\[
M\left(n,m_P,m_F\right) = \begin{cases} L(n,m_F,1) &\mbox{if } m_P = 0 \\
2\log_n\left(B(n,m_P)\right)+2 & \mbox{if } m_F = 0\\
 2\log_{\frac{n}{n-1}}\left(6\max_{1\leq i < n}\left(B\left(i,m_P\right)\right)\right)& \mbox{if } m_F = 1\\
\frac{7\eta}{6} + 2\log_n\left(\max_{0\leq i < n}\left(B\left(i,m_P\right)\right)2^{\frac{\eta}{12}}\right) + 2& \mbox{otherwise}
 \end{cases} 
\]
is sufficient to have $\left|a_{PF}(n)\right| \leq n^{\frac{\ell}{2}-1}$.
\end{proof}
Now we consider the equation
\begin{equation*}
\sum_{i=1}^{r}A_{i}P_{i}F_{i}=h .
\end{equation*}
To prove the main theorem, fix the variables as in Theorem~\ref{thm:equivmain} and let $\ell$ be the weight of $h$ and each term in the polynomial. 
Now fix an odd prime $p$ such that $p > \left(\sum_{i=0}^r\left|A_i\right|\right)^2 + \sum_{i=0}^r\left|A_i\right|$.
As $h$ is an eigenform, it obeys the $p^{th}$ order Hecke relation
\[
p^{\ell-1}=\left(a_h(p)\right)^2-a_h\left(p^2\right).
\]
Using \eqref{eq:cuspidal-poly} and the triangle inequality, we get
\[
p^{\ell-1} \leq \left(\sum_{i=0}^r\left|A_i\right|\left|a_{P_iF_i}(p)\right|\right)^2 + \sum_{i=0}^r\left|A_i\right|\left|a_{P_iF_i}\left(p^2\right)\right|.
\]
If $\ell > \max_{1\leq i\leq r}\left(\max\left(M\left(p,m_{P_i},m_{F_i}\right),M\left(p^2,m_{P_i},m_{F_i}\right)\right)\right)$, then each term meets the requirements of Proposition~\ref{prop:cuspidalcaseassist}, giving
\begin{align*}
p^{\ell-1} & \leq \left(\sum_{i=0}^r\left|A_i\right|\left|a_{P_iF_i}(p)\right|\right)^2 + \sum_{i=0}^r\left|A_i\right|\left|a_{P_iF_i}\left(p^2\right)\right| \\
& \leq \left(\sum_{i=0}^r\left|A_i\right|p^{\frac{\ell}{2}-1}\right)^2 + \sum_{i=0}^r \left|A_i\right|p^{\ell-2} \\
& = p^{\ell-2}\left(\left(\sum_{i=0}^r\left|A_i\right|\right)^2 + \sum_{i=0}^r\left|A_i\right|\right) \\
& < p^{\ell-1}.
\end{align*}
However, this is a contradiction, so $\ell \leq \max_{1\leq i\leq r}\left(\max\left(M\left(p,m_{P_i},m_{F_i}\right),M\left(p^2,m_{P_i},m_{F_i}\right)\right)\right)$. This bounds the weight of $h$, which is sufficient to prove Theorem~\ref{thm:equivmain} in this case.

\section{Examples of eigenform polynomial relations}\label{section:examples}
In this section we give some explicit examples of polynomial decompositions of eigenforms for $\SL_2(\Z)$. We remark that it is in general easy to derive examples of polynomial decompositions by finding linearly dependent sets of products of eigenforms, which is easy to do because the space of modular forms over $\SL_2(\Z)$ for a fixed weight is finite-dimensional.

We begin with the well-known identity for $E_{12}$:
%
%
%%If one considers the decomposition of $E_\ell$, residing in a $d$-dimensional space, into a polynomial on $E_4$ and $E_6$, there will be $d$ terms.
%It is simple to find decompositions of a weight $k$ eigenform into a polynomial with $d$ terms, if $d= \dim(\M_k(\SL_2(\Z)))$. For instance, in the 2-dimensional space $\M_{12}(\SL_2(\Z))$ we have the following identity:
%
\begin{equation} \label{eq:E12_relation}
E_{12} =  \frac{441}{691} E_4^3 + \frac{250}{691}E_6^2.
\end{equation}

%In general, it is easy to find decompositions using $d-1$ terms if $f$ is a cusp form. For instance, 

If $\M_k(\SL_2(\Z))$ has dimension 2 and $\Delta_{k}$ denotes the unique cuspidal eigenform of $\M_k(\SL_2(\Z))$, we obtain for certain constants $A_1,A_2$ that

\begin{equation} \label{eq:f24_relation}
A_1E_{6} \Delta_{18} + A_2E_4 \Delta_{20} = g_1 \approx q - 0.0001962q^{2} + 195660.0094185q^{3} + O(q^{4}) .
\end{equation}

\begin{equation} \label{eq:f24_relation2}
B_1E_{6} \Delta_{18} + B_2E_4 \Delta_{20} = g_2 \approx q + 0.0002489q^{2} + 195659.9880488q^{3} +  O(q^{4}) .
\end{equation}

Here $g_1$ and $g_2$ are the cuspidal eigenforms of $M_{24}\left(\SL_2(\Z)  \right)$, which is a 3-dimensional space. It may be shown with elementary linear algebra that the $A_i $ and $B_i$ are the algebraic numbers satisfying $A_1 > B_1$, $A_2 < B_2$, and the following minimal polynomials:

\[
A_1, B_1: x^{2} - \frac{131909627}{163749888} x + \frac{9915382466495}{61119318196224}=0
\]
\[
 A_2, B_2: x^{2} - \frac{195590149}{163749888} x + \frac{21799696204223}{61119318196224}=0.
\]

These identities may be verified by checking equality of sufficiently (but finitely) many Fourier coefficients due to the Sturm bound \cite{sturm_bound}.

%As $\{ E_6f_{18}, E_4f_{20} \}$ is a basis for $S_{24}\left(\SL_2(\Z) \right)$, this relation is not special. Any cusp form in $S_{24}\left(\SL_2(\Z) \right)$ could be written as such a linear combination. We sought to find a relation without this property, an eigenform that could be written as a polynomial on lower-weight eigenforms with $d-2$ terms or fewer, where $d$ denotes the dimension of the space. It may be shown computationally that no such eigenform exists for weight below 50 (code is available online at the second author's website). Just as the only product relations for a single term emerge from dimension considerations, it may be that a polynomial decomposition must always have enough terms to span the space in which an eigenform resides.

It is interesting to note that our methods allow one to compute explicit upper bounds on weight in the cuspidal case. A simple procedure determines a value $\nu$ for a given polynomial $P$, such that if a cuspidal eigenform $f$ may be written as the output of a polynomial $P$ with eigenform inputs, then the weight of $f$ may be at most $\nu$. This was implemented in code available from the second author, and a few examples are shown below. Note that $A_1,A_2,B_1,$ and $B_2$ are the same as above.

\begin{table*}[!h]
\begin{center}
\begin{tabular}[b]{|c|c|}
\hline
Polynomial & $\nu$-bound \\
\hline
$P(x,y) = \frac{441}{691} x^3 + \frac{250}{ 691} y^2$ & 66\\
\hline
$P(x,y,z,w) = A_1 xy + A_2 zw$ &  988\\
\hline
$P(x,y,z,w) = B_1 xy + B_2 zw$ &  988\\
\hline
\end{tabular}
\end{center}
\caption{Weight bounds in the cuspidal case for various polynomials}
\end{table*}

\section{Acknowledgments}\label{sec:ack}
The authors are undergraduates at the University of Michigan and wrote this paper as part of the University of Michigan REU program. The authors would like to thank the program for hosting and funding the research. The authors are grateful to their advisor, Dr. Benjamin Linowitz of the University of Michigan, for his valuable input over the course of the summer, and for reading numerous drafts of the paper. Additionally, the authors would like to thank Dr. Matthew Stover of Temple University for his help arranging the project.

% ------------------------------------------- Begin References
% -------------------------------------------

\bibliographystyle{plain}

\end{document}